\title{On a Unified and Simplified Proof for the Ergodic Convergence Rates of PPM, PDHG and ADMM}
\author{Haihao Lu\thanks{The University of Chicago, Booth School of Business (haihao.lu@chicagobooth.edu).} \and Jinwen Yang\thanks{The University of Chicago, Department of Statistics (jinweny@uchicago.edu).}}
\date{May 2023}
\begin{document}
\maketitle

\begin{abstract}
    We present a unified viewpoint of proximal point method (PPM), primal-dual hybrid gradient (PDHG) and alternating direction method of multipliers (ADMM) for solving convex-concave primal-dual problems. This viewpoint shows the equivalence of these three algorithms upto a norm change, and it leads to a four-line simple proof of their $\mathcal O(1/k)$ ergodic rates. The simple proof technique is not limited to these three algorithms, but can also be utilized to analyze related algorithms, such as gradient descent, linearized PDHG, inexact algorithms, just to name a few.
\end{abstract}

\section{Introduction}

We study three classic algorithms, proximal point method (PPM), primal-dual hybrid gradient method (PDHG) and alternating direction method of multipliers (ADMM), for solving a convex-concave primal-dual problem,
\begin{equation}\label{eq:minimax-ppm}
    \min_{x\in\mathbb R^n}\max_{\lambda\in\mathbb R^m}\; L(x,\lambda) \ ,
\end{equation}
where $L(x,\lambda)$ is lower semicontinuous convex in the primal variable $x$, upper semicontinuous concave in the dual variable $\lambda$.  Such convex-concave primal-dual problems and their primal and/or dual forms have various applications in image processing, statistics, conic programming, etc. 

While PDHG and ADMM are extensively studied in the optimization literature and widely used in practice, the proofs of their $\mathcal O(1/k)$ ergodic convergence rate  are still viewed as ``technical'' with non-trivial mathematical manipulations. These proofs make it difficult to gain intuitions about the extrapolation steps in PDHG and the carefully-chosen cyclic rule in ADMM and to extend these algorithms in other settings. 
In this paper, we provide a unified viewpoint of PPM, PDHG and ADMM. This viewpoint clearly shows that PDHG and ADMM are variants of PPM with different choices of norms. Furthermore, this viewpoint leads to a four-line simple and unified proof (Theorem \ref{thm:main}) of the $\mathcal O(1/k)$ ergodic rate for these three algorithms. Indeed, this viewpoint was also mentioned in \cite{he2012convergence,shefi2014rate}, which provides an approximate PPM interpretation of PDHG, but such connection was not further explored to obtain convergence analysis. Recently, \cite{o2020equivalence,liu2021acceleration,yan2018new} show the equivalence of ADMM/DRS and PDHG up to a change of norm, which can also be clearly obtained from the such interpretation.

We believe this perspective provides a fundamental understanding of these three algorithms, which can also facilitate developing new variants of these algorithms. Indeed, this viewpoint and the convergence result is not limited to analyze these three algorithms, and can also be easily extended to analyze other first-order methods, such as gradient descent for convex optimization, linearized PDHG, inexact algorithms, just to name a few.



In 1976, Rockafellar proposed proximal point method (PPM) in his seminal work~\cite{rockafellar1976monotone}. PPM was initially designed to solve monotone inclusion problems. As a special case of monotone inclusion, primal-dual problems can be solved utilizing PPM, as described in Algorithm \ref{alg:ppm}. PPM can be viewed as an implicit discretization of gradient flow when the objective $L(x,\lambda)$ is sufficiently differentiable. However, PPM requires solving an implicit equation jointly in the primal and the dual variable, which are often computationally challenging. A more practical class of algorithms is the operator splitting methods which alternatively update the primal and the dual variables. The two most popular operator splitting algorithms are perhaps primal-dual hybrid gradient method (PDHG)~\cite{chambolle2011first,chambolle2016ergodic} and alternating direction method of multipliers (ADMM)~\cite{glowinski1975approximation,eckstein1992douglas, boyd2011distributed}.
\begin{algorithm}
    \renewcommand{\algorithmicrequire}{\textbf{Input:}}
    \caption{PPM for \eqref{eq:minimax-ppm}}
    \label{alg:ppm}
    \begin{algorithmic}[1]
        \REQUIRE initial point $(x^0,\lambda^0)$, {step-size} $\eta >0$.
        \FOR{$k=0,1,...$}
        \STATE $(x^{k+1},\lambda^{k+1}):=\arg\min_{x}\arg\max_{\lambda}\left\{ L(x,\lambda)+\frac{1}{2\eta}\|x-x^k\|_2^2-\frac{1}{2\eta}\|\lambda-\lambda^k\|_2^2\right\}$
        \ENDFOR
    \end{algorithmic}
\end{algorithm}

PDHG was initially designed for image processing applications~\cite{chambolle2011first}, and it is recently used as the base algorithm in a new linear programming (LP) solver PDLP~\cite{applegate2021practical} that aims to significantly scale up LP. PDHG~\cite{chambolle2011first} solves the composite minimization problem
\begin{equation*}
    \min_x\; f(x)+g(Ax) \ ,
\end{equation*}
and its primal-dual formulation,
\begin{equation}\label{eq:minimax-pdhg}
    \min_x\max_{\lambda}\; L(x,\lambda)= f(x)-\lambda^TAx-g^*(\lambda) \ ,
\end{equation}
where $f$ and $g$ are lower semi-continuous convex functions, $g^*(\lambda):=\max_{y} \{\lambda^Ty-g(y)\}$ is the conjugate function of function $g$. The update rule of PDHG is presented in Algorithm \ref{alg:pdhg}. The ergodic rate of PDHG was first shown in~\cite{chambolle2011first}, and a simplified proof was presented in \cite{chambolle2016ergodic}.
\begin{algorithm}
    \renewcommand{\algorithmicrequire}{\textbf{Input:}}
    \caption{PDHG for \eqref{eq:minimax-pdhg}}
    \label{alg:pdhg}
    \begin{algorithmic}[1]
        \REQUIRE initial point $(x^0,\lambda^0)$, {step-size} $\eta >0$.
        \FOR{$k=0,1,...$}
        \STATE $x^{k+1}:=\arg\min_{x}\left\{ f(x)-\langle A^T\lambda^k,x-x^k\rangle+\frac{1}{2\eta}\|x-x^k\|_2^2 \right\}$
        \STATE $\lambda^{k+1}:=\arg\min_{\lambda}\left\{ g^*(\lambda)+\langle A(2x^{k+1}-x^k),\lambda-\lambda^k\rangle+\frac{1}{2\eta}\|\lambda-\lambda^k\|_2^2 \right\}$
        \ENDFOR
    \end{algorithmic}
\end{algorithm}

ADMM solves the following linearly constrained minimization problem:
\begin{align}\label{eq:problem-admm}
    \begin{split}
        & \ \min_{x,y}\; f(x)+g(y) \\
        & \quad \mathrm{s.t.}\; Ax+By=b \ ,
    \end{split}
\end{align} and its primal-dual form

\begin{align}\label{eq:minimax-admm}
    \min_x \max_{\lambda}\; L(x,y,\lambda) = f(x)+g(y)-\lambda^T(Ax+By-b) \ ,
\end{align} 
where $f$ and $g$ are lower semi-continuous convex functions. The update rule of ADMM is presented in Algorithm \ref{alg:admm}. ADMM can be viewed as a Douglas-Rachford splitting method~\cite{glowinski1975approximation} for optimizing the dual problem to \eqref{eq:problem-admm}. The eventual convergence result of ADMM was presented in~\cite{eckstein1992douglas}, and the first $\mathcal O(1/k)$ ergodic convergence rate of ADMM was obtained in \cite{he20121}. It is used as the base algorithm for the conic programming solver SCS~\cite{o2016conic} and the quadratic programming solver OSQP~\cite{stellato2020osqp}. More applications and a comprehensive discussion on ADMM can be found in \cite{boyd2011distributed}.

\begin{algorithm}
    \renewcommand{\algorithmicrequire}{\textbf{Input:}}
    \caption{ADMM for \eqref{eq:problem-admm}}
    \label{alg:admm}
    \begin{algorithmic}[1]
        \REQUIRE initial point $(x^0,\lambda^0)$, {step-size} $\eta >0$.
        \FOR{$k=0,1,...$}
        \STATE $y^{k+1}:=\arg\min_y\left\{ g(y)-\langle B^T\lambda^k,y\rangle +\frac{\eta}{2}\|Ax^k+By-b\|_2^2 \right\}$
        \STATE $\lambda^{k+1}:=\lambda^k-\eta(Ax^k+By^{k+1}-b)$
        \STATE $x^{k+1}:=\arg\min_x \left\{ f(x) -\langle A^T\lambda^{k+1},x\rangle+\frac{\eta}{2}\|Ax+By^{k+1}-b\|_2^2  \right\}$
        \ENDFOR
    \end{algorithmic}
\end{algorithm}

\section{Main results}
In this section, we present a unified and simplified proof for ergodic rates of PPM, PDHG and ADMM. We start with presenting a generic algorithm and show that PPM, PDHG and ADMM are special cases of this generic algorithm with a different choice of norm. Then we show the $\mathcal O(1/k)$ ergodic rate of this generic algorithm. As a direct consequence, this shows the ergodic rate of the three algorithms, and show the equivalence of these algorithms upto a norm.

We denote $z=(x,\lambda)$ as the primal-dual solution pair and $\mathcal F(z)=\mathcal F(x,\lambda)=\begin{pmatrix}
    \partial_x L(x,\lambda) \\ -\partial_{\lambda}  L(x,\lambda)
\end{pmatrix}$ as the sub-gradient of the objective (more precisely, the gradient over the primal variable $x$ and the negative gradient over the dual variable $\lambda$). We consider a generic algorithm with iterate update rule:
\begin{equation}\label{eq:generic}
    P(z^k-z^{k+1})\in\mathcal F(z^{k+1}) \ ,
\end{equation}
where $P\in \mathbb R^{(m+n)\times (m+n)}$ is a positive semi-definite matrix.

Next, we show that the iterate updates of PPM, PDHG and ADMM all follow with \eqref{eq:generic} with a proper choice of matrix $P$.

\begin{lem}\label{lem:ppm}
    The update rule of PPM (Algorithm \ref{alg:ppm}) for the primal-dual problem \eqref{eq:minimax-ppm} is an instance of \eqref{eq:generic}
    with $P= \frac{1}{\eta}I 
    $.
\end{lem}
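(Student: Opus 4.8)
The plan is to translate the joint saddle-point subproblem that defines the PPM step into its first-order optimality conditions, and then check that stacking the primal and dual conditions into the combined variable $z=(x,\lambda)$ reproduces exactly the generic inclusion \eqref{eq:generic} with $P=\frac{1}{\eta}I$.

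First I would record that the regularized objective appearing in Algorithm \ref{alg:ppm}, namely $L(x,\lambda)+\frac{1}{2\eta}\|x-x^k\|_2^2-\frac{1}{2\eta}\|\lambda-\lambda^k\|_2^2$, is strongly convex in $x$ and strongly concave in $\lambda$. This guarantees that the inner $\arg\min_x\arg\max_\lambda$ admits a unique saddle point $(x^{k+1},\lambda^{k+1})$ at which the primal and dual stationarity conditions hold simultaneously. Writing these two conditions out, minimizing over $x$ gives $0\in\partial_x L(x^{k+1},\lambda^{k+1})+\frac{1}{\eta}(x^{k+1}-x^k)$, and maximizing over $\lambda$ gives $0\in\partial_\lambda L(x^{k+1},\lambda^{k+1})-\frac{1}{\eta}(\lambda^{k+1}-\lambda^k)$.

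Next I would rearrange each inclusion. The primal condition yields $\frac{1}{\eta}(x^k-x^{k+1})\in\partial_x L(x^{k+1},\lambda^{k+1})$, and, after negating, the dual condition yields $\frac{1}{\eta}(\lambda^k-\lambda^{k+1})\in-\partial_\lambda L(x^{k+1},\lambda^{k+1})$. Stacking these into the variable $z=(x,\lambda)$ and invoking the definition of $\mathcal F$, the combined right-hand side is precisely $\mathcal F(z^{k+1})$ while the combined left-hand side is $\frac{1}{\eta}(z^k-z^{k+1})$. This is exactly \eqref{eq:generic} with $P=\frac{1}{\eta}I$, which is positive definite and hence positive semi-definite, so the PPM update is an instance of the generic rule as claimed.

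I expect the only delicate point to be justifying that the subdifferential of the sum separates into $\partial_x L$ plus the gradient of the smooth quadratic penalty (Moreau--Rockafellar sum rule), which applies here because the quadratic term is finite-valued everywhere, together with the fact that the joint $\arg\min\arg\max$ genuinely produces simultaneous primal-dual stationarity rather than merely a pair of one-sided optima. Once these structural facts are in place, the remaining work is the routine sign bookkeeping and rearrangement sketched above.
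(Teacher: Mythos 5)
Your proposal is correct and follows exactly the route the paper intends: the paper's proof is the one-line remark that the lemma follows ``directly from the first-order stationary condition of the update rule of PPM,'' and your argument simply writes out those stationarity conditions, rearranges the signs, and stacks them into $z=(x,\lambda)$ to recover \eqref{eq:generic} with $P=\frac{1}{\eta}I$. The extra care you take (uniqueness of the regularized saddle point and the sum rule for the subdifferential) is sound and fills in details the paper leaves implicit.
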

\begin{proof}
    This can be obtained directly from the first-order stationary condition of the update rule of PPM.
\end{proof}

\begin{lem}\label{lem:pdhg}
    The update rule of PDHG (Algorithm \ref{alg:pdhg}) for the primal-dual problem \eqref{eq:minimax-pdhg} is an instance of \eqref{eq:generic}
    with $P=\begin{pmatrix}
        \frac{1}{\eta}I & A^T \\ A & \frac{1}{\eta}I
    \end{pmatrix}$.
\end{lem}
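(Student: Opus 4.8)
The plan is to read off the first-order stationarity condition of each of the two proximal subproblems in Algorithm~\ref{alg:pdhg} and verify, block by block, that together they assemble into the inclusion~\eqref{eq:generic} with the stated matrix $P$. First I would record the operator $\mathcal F$ for problem~\eqref{eq:minimax-pdhg}: since $L(x,\lambda)=f(x)-\lambda^T A x - g^*(\lambda)$, differentiating gives $\mathcal F(z)=\big(\partial f(x)-A^T\lambda,\; Ax+\partial g^*(\lambda)\big)$, so the target inclusion has first block $\partial f(x^{k+1})-A^T\lambda^{k+1}$ and second block $Ax^{k+1}+\partial g^*(\lambda^{k+1})$.

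For the primal step, stationarity of the $x$-subproblem yields $0\in\partial f(x^{k+1})-A^T\lambda^k+\tfrac1\eta(x^{k+1}-x^k)$, i.e.\ $\tfrac1\eta(x^k-x^{k+1})+A^T\lambda^k\in\partial f(x^{k+1})$. The key observation is that the prox evaluates the coupling term at the \emph{stale} dual $\lambda^k$, whereas the first block of $\mathcal F(z^{k+1})$ requires $-A^T\lambda^{k+1}$; subtracting $A^T\lambda^{k+1}$ from both sides converts $A^T\lambda^k$ into $A^T(\lambda^k-\lambda^{k+1})$, which is exactly the top row $\tfrac1\eta(x^k-x^{k+1})+A^T(\lambda^k-\lambda^{k+1})$ of $P(z^k-z^{k+1})$.

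For the dual step, stationarity of the $\lambda$-subproblem yields $0\in\partial g^*(\lambda^{k+1})+A(2x^{k+1}-x^k)+\tfrac1\eta(\lambda^{k+1}-\lambda^k)$. Here the role of the extrapolation is decisive: writing $A(2x^{k+1}-x^k)=Ax^{k+1}-A(x^k-x^{k+1})$ and rearranging gives $Ax^{k+1}+\partial g^*(\lambda^{k+1})\ni \tfrac1\eta(\lambda^k-\lambda^{k+1})+A(x^k-x^{k+1})$, which is precisely the bottom row of $P(z^k-z^{k+1})$. Stacking the two blocks yields~\eqref{eq:generic} with the claimed $P$.

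The computation is otherwise mechanical; the only genuinely substantive point is to recognize that the staleness of $\lambda^k$ in the primal step and the extrapolation $2x^{k+1}-x^k$ in the dual step are exactly what generate the symmetric off-diagonal blocks $A^T$ and $A$ of $P$. I would therefore organize the proof to foreground these two cancellations rather than present them as opaque algebra, since this is what makes the structure of $P$ transparent and anticipates the ADMM case to follow.
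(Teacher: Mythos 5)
Your proposal is correct and follows essentially the same route as the paper's proof: write the first-order optimality conditions of the two proximal subproblems in Algorithm~\ref{alg:pdhg}, then rearrange (moving $A^T\lambda^{k+1}$ across in the primal block and splitting the extrapolated term $A(2x^{k+1}-x^k)$ in the dual block) to match the rows of $P(z^k-z^{k+1})\in\mathcal F(z^{k+1})$. The added commentary on how the stale dual and the extrapolation produce the off-diagonal blocks of $P$ is a nice expository touch but does not change the argument.
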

\begin{proof}
    Notice that we have $\mathcal F(z)=\begin{pmatrix}
        \partial f(x) -A^T\lambda \\ \partial g^*(\lambda) +Ax
    \end{pmatrix}$ for the objective $L(x,\lambda)$ defined in \eqref{eq:minimax-pdhg}. The first-order optimality condition of update rules in Algorithm \ref{alg:pdhg} are 
    \begin{align*}
        \begin{split}
            & 0\in \partial f(x^{k+1})+\frac{1}{\eta} (x^{k+1}-x^k-\eta A^T\lambda^k)\\
            & 0 \in \partial g^*(\lambda^{k+1})+\frac{1}{\eta} (\lambda^{k+1}-\lambda^k+\eta A(2x^{k+1}-x^k)) \ .
        \end{split}
    \end{align*}
    Rearranging the above update rules, we reach at
    \begin{align*}
        \begin{split}
            & \frac{1}{\eta} (x^k-x^{k+1}) + A^T(\lambda^k-\lambda^{k+1}) \in \partial f(x^{k+1})- A^T\lambda^{k+1}\\
            & \frac{1}{\eta} (\lambda^k-\lambda^{k+1})+A(x^k-x^{k+1}) \in \partial g^*(\lambda^{k+1})+ Ax^{k+1}
        \end{split}
    \end{align*}
    that is, 
    \begin{equation*}
        P(z^k-z^{k+1}) \in\mathcal F(z^{k+1}) \ .
    \end{equation*}
\end{proof}

\begin{lem}\label{lem:admm}
    The update rule of ADMM (Algorithm \ref{alg:admm}) for the primal-dual problem \eqref{eq:minimax-admm} is an instance of \eqref{eq:generic}
    with $P=\begin{pmatrix}
        0 & 0 & 0 \\ 0 & \eta A^TA & -A^T \\ 0 & -A & \frac{1}{\eta}I
    \end{pmatrix}$.
\end{lem}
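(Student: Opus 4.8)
The plan is to mirror the computation carried out for PDHG in Lemma \ref{lem:pdhg}: I will write the first-order optimality condition of each of the three subproblems in Algorithm \ref{alg:admm} and then algebraically rearrange them into the single inclusion $P(z^k-z^{k+1})\in\mathcal F(z^{k+1})$. Here the primal-dual variable is the triple $z=(y,x,\lambda)$, ordered so that its blocks line up with those of the stated matrix $P$: the block carrying $A$ must be the $x$-block, which forces $y$ into the first, ``clean'', block. For the objective in \eqref{eq:minimax-admm} the operator is
\begin{equation*}
    \mathcal F(z)=\begin{pmatrix} \partial g(y)-B^T\lambda \\ \partial f(x)-A^T\lambda \\ Ax+By-b \end{pmatrix}\ ,
\end{equation*}
so the goal reduces to matching each block of $\mathcal F(z^{k+1})$ against the corresponding row of $P(z^k-z^{k+1})$.

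First I would handle the $y$- and $\lambda$-steps, which together produce the zero first block and the third block of $P$. The stationarity condition of the $y$-subproblem is $0\in\partial g(y^{k+1})-B^T\lambda^k+\eta B^T(Ax^k+By^{k+1}-b)$. The explicit dual step rewrites as $Ax^k+By^{k+1}-b=\tfrac{1}{\eta}(\lambda^k-\lambda^{k+1})$; substituting this into the $y$-condition collapses the penalty term and yields the clean inclusion $0\in\partial g(y^{k+1})-B^T\lambda^{k+1}$, matching the all-zero first row of $P$. The same dual identity will be reused, after the $x$-update, to recover the third block.

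The main obstacle is the $x$-step, because its penalty term involves the residual $Ax^{k+1}+By^{k+1}-b$ evaluated at the \emph{new} iterate $x^{k+1}$, whereas the dual update only controls the residual at the \emph{old} iterate $x^k$. The key maneuver is the splitting
\begin{equation*}
    Ax^{k+1}+By^{k+1}-b=A(x^{k+1}-x^k)+\big(Ax^k+By^{k+1}-b\big)=A(x^{k+1}-x^k)+\tfrac1\eta(\lambda^k-\lambda^{k+1})\ .
\end{equation*}
Inserting this into the $x$-stationarity condition $0\in\partial f(x^{k+1})-A^T\lambda^{k+1}+\eta A^T(Ax^{k+1}+By^{k+1}-b)$ and moving the known terms to the left produces exactly $\eta A^TA(x^k-x^{k+1})-A^T(\lambda^k-\lambda^{k+1})\in\partial f(x^{k+1})-A^T\lambda^{k+1}$, which is the second row of $P(z^k-z^{k+1})$. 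The emergence of the $\eta A^TA$ block is precisely the price paid for this old-versus-new residual mismatch, and it is what distinguishes ADMM's norm from PDHG's.

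Finally, feeding the same splitting back into the dual identity gives $-A(x^k-x^{k+1})+\tfrac1\eta(\lambda^k-\lambda^{k+1})=Ax^{k+1}+By^{k+1}-b$, i.e.\ the third block. Collecting the three blocks yields $P(z^k-z^{k+1})\in\mathcal F(z^{k+1})$ with the claimed $P$. I would close by noting that this $P$ is positive semi-definite, since its nontrivial $2\times2$ block gives the quadratic form $\|\sqrt\eta\,Av-\tfrac1{\sqrt\eta}w\|_2^2\ge0$, so it genuinely qualifies as an instance of the generic scheme \eqref{eq:generic}.
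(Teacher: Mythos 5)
Your proof is correct and takes essentially the same route as the paper: write the first-order optimality conditions of the three subproblems and use the dual-update identity $Ax^k+By^{k+1}-b=\tfrac{1}{\eta}(\lambda^k-\lambda^{k+1})$ to rearrange them into the three block rows of $P(z^k-z^{k+1})\in\mathcal F(z^{k+1})$, with only cosmetic differences in how the substitution is organized in the $x$-step. Your closing check that $P\succeq 0$, via the square $\bigl\|\sqrt{\eta}\,Av-\tfrac{1}{\sqrt{\eta}}w\bigr\|_2^2\ge 0$, is a small bonus the paper leaves implicit even though Theorem \ref{thm:main} relies on it.
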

\begin{proof}
    Notice that we have $\mathcal F(z)=\begin{pmatrix}
        \partial g(y^{k+1})-B^T\lambda^{k+1} \\ \partial f(x^{k+1})-A^T\lambda^{k+1} \\
        Ax^{k+1}+By^{k+1}-b
    \end{pmatrix}$ for the objective $L(x,y,\lambda)$ defined in \eqref{eq:minimax-admm}. By utilizing the first-order optimality conditions, we can rewrite the three update rules in Algorithm \ref{alg:pdhg} on $y^{k+1}, \lambda^{k+1}, x^{k+1}$ as
    
    \begin{align}\label{eq:up-0}
    \begin{split}
        0& \in \partial g(y^{k+1})+\eta B^T(Ax^k+By^{k+1}-b-\frac{1}{\eta}\lambda^k)= \partial g(y^{k+1})+\eta B^T\frac{1}{\eta}(\lambda^k-\lambda^{k+1}-\lambda^k)=\partial g(y^{k+1})- B^T\lambda^{k+1}\ ,
    \end{split}
\end{align}
\begin{align}\label{eq:up-2}
    \begin{split}
        \frac{1}{\eta}(\lambda^{k}-\lambda^{k+1})-A(x^k-x^{k+1}) 
        =By^{k+1}+Ax^{k+1}-b \ ,
    \end{split}
    \end{align}
\begin{align}\label{eq:up-1}
    \begin{split}
        0 & \in \partial f(x^{k+1})+\eta A^T(Ax^{k+1}+By^{k+1}-b-\frac{1}{\eta}\lambda^{k+1})  = \partial f(x^{k+1})+\eta A^T(Ax^{k+1}-Ax^k+\frac{1}{\eta}\lambda^{k}-\frac{2}{\eta}\lambda^{k+1})\ .
    \end{split}
\end{align}
Rearranging \eqref{eq:up-0}, \eqref{eq:up-1} and \eqref{eq:up-2}, we reach
    \begin{equation*}
        P(z^k-z^{k+1})=\begin{pmatrix}
        0 & 0 & 0 \\ 0 & \eta A^TA & -A^T \\ 0 & -A & \frac{1}{\eta}I
    \end{pmatrix}\begin{pmatrix}
        y^k-y^{k+1} \\ x^k-x^{k+1} \\ \lambda^k-\lambda^{k+1}
    \end{pmatrix} \in \begin{pmatrix}
        \partial g(y^{k+1})-B^T\lambda^{k+1} \\ \partial f(x^{k+1})-A^T\lambda^{k+1} \\
        Ax^{k+1}+By^{k+1}-b
    \end{pmatrix}=\mathcal F(z^{k+1}) \ .
    \end{equation*}
\end{proof}

The above three lemmas clearly show the connections and differences among PPM, ADMM and PDHG: they share very similar update rules (i.e., \eqref{eq:generic}) with a different choice of the matrix $P$. Indeed, these $P$ norm matrices appear in the analysis of ADMM~\cite{he20121} and PDHG~\cite{chambolle2016ergodic} without many intuitions, and our viewpoint of \eqref{eq:generic} make clear where they come from. 

The next theorem is our main result, which shows the ergodic rate of any algorithm with update rule \eqref{eq:generic} when $P$ is positive semi-definite.

\begin{thm}\label{thm:main}
    Consider an iterate update rule \eqref{eq:generic} with a positive semi-definite matrix $P$ for solving a convex-concave primal-dual problem \eqref{eq:minimax-ppm}. The iterates $\{z^k=(x^k,\lambda^k)\}_{k=1,...,\infty}$ are obtained from this iterate update rule and the initial solution $z^0=(x^0,\lambda^0)$. Denote $\bar z^k=(\bar x^k,\bar \lambda^k)=\frac{1}{k}\sum_{i=1}^k z^i$ as the average iterate. Then it holds for any $k\ge 1$ and a primal-dual solution $z=(x,\lambda)$ that
    \begin{equation*}
        L(\bar x^k,\lambda)- L(x,\bar \lambda^k)\leq \frac{\|z-z^0\|^2_{P}}{2k} \ .
    \end{equation*}
\end{thm}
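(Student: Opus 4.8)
The plan is to convert the implicit inclusion \eqref{eq:generic} into a variational-inequality bound using only convexity-concavity, and then telescope. First I would unpack \eqref{eq:generic}: writing $g := P(z^k-z^{k+1})$ and splitting $g=(g_x,g_\lambda)$, the inclusion $g\in\mathcal F(z^{k+1})$ means exactly that $g_x\in\partial_x L(x^{k+1},\lambda^{k+1})$ and $-g_\lambda\in\partial_\lambda L(x^{k+1},\lambda^{k+1})$. Since $L(\cdot,\lambda^{k+1})$ is convex and $L(x^{k+1},\cdot)$ is concave, the subgradient inequalities give, for any fixed primal-dual pair $(x,\lambda)$,
\[
L(x^{k+1},\lambda^{k+1})-L(x,\lambda^{k+1})\le \langle g_x,\, x^{k+1}-x\rangle,\qquad
L(x^{k+1},\lambda)-L(x^{k+1},\lambda^{k+1})\le \langle -g_\lambda,\, \lambda-\lambda^{k+1}\rangle .
\]
Adding these cancels the middle term $L(x^{k+1},\lambda^{k+1})$ and yields the key one-step bound
\[
L(x^{k+1},\lambda)-L(x,\lambda^{k+1})\le \langle g,\, z^{k+1}-z\rangle=\langle P(z^k-z^{k+1}),\, z^{k+1}-z\rangle .
\]

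Next I would rewrite the right-hand side as a difference of $P$-seminorms. Using symmetry of $P$ and the three-point (cosine) identity with $a=z^k$, $b=z^{k+1}$, $c=z$,
\[
\langle P(z^k-z^{k+1}),\, z^{k+1}-z\rangle=\tfrac12\big(\|z^k-z\|_P^2-\|z^{k+1}-z\|_P^2-\|z^k-z^{k+1}\|_P^2\big).
\]
Because $P$ is positive semi-definite, the term $\|z^k-z^{k+1}\|_P^2$ is nonnegative and may be discarded, giving the clean telescoping inequality $L(x^{k+1},\lambda)-L(x,\lambda^{k+1})\le \tfrac12(\|z^k-z\|_P^2-\|z^{k+1}-z\|_P^2)$.

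Finally I would sum this over $k=0,\dots,K-1$; the right-hand side telescopes to $\tfrac12(\|z^0-z\|_P^2-\|z^K-z\|_P^2)\le \tfrac12\|z-z^0\|_P^2$, while the left-hand side equals $\sum_{i=1}^{K}\big(L(x^i,\lambda)-L(x,\lambda^i)\big)$. Dividing by $K$ and applying Jensen's inequality — convexity of $L(\cdot,\lambda)$ to get $L(\bar x^K,\lambda)\le \frac1K\sum_i L(x^i,\lambda)$, and concavity of $L(x,\cdot)$ to get $L(x,\bar\lambda^K)\ge \frac1K\sum_i L(x,\lambda^i)$ — produces exactly $L(\bar x^K,\lambda)-L(x,\bar\lambda^K)\le \|z-z^0\|_P^2/(2K)$.

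I expect the main subtlety to lie in the first step rather than in the algebra: one must correctly identify which block of $g$ is a subgradient in $x$ and which is a \emph{negative} supergradient in $\lambda$, so that summing the convexity and concavity inequalities cancels the $L(x^{k+1},\lambda^{k+1})$ terms and leaves a single inner product against $z^{k+1}-z$. The cosine identity and the telescoping are then routine once $P$ is taken symmetric (as it is in all three instances of Lemmas \ref{lem:ppm}--\ref{lem:admm}), and positive semi-definiteness enters only to drop the $\|z^k-z^{k+1}\|_P^2$ term; no strong convexity or invertibility of $P$ is required.
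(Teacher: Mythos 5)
Your proof is correct and takes essentially the same route as the paper's: the same one-step bound $L(x^{k+1},\lambda)-L(x,\lambda^{k+1})\le \langle P(z^k-z^{k+1}), z^{k+1}-z\rangle$ from convexity-concavity, the same three-point identity in the $P$-seminorm, dropping the nonnegative $\|z^k-z^{k+1}\|_P^2$ term, and the same telescoping-plus-Jensen finish. The only difference is presentational: you spell out the subgradient/supergradient unpacking of $\mathcal F$ (including the sign convention on the dual block) that the paper compresses into a single line.
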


\begin{proof}
Denote $w^{k+1}=P(z^k-z^{k+1})\in \mathcal F(z^{k+1})$. It then holds by convexity-concavity of $L(x,\lambda)$ that
\begin{align}\label{eq:bound-gap}
    \begin{split}
        & L(x^{k+1},\lambda)- L(x,\lambda^{k+1}) \ = L(x^{k+1},\lambda)-L(x^{k+1},\lambda^{k+1})+L(x^{k+1},\lambda^{k+1})-L(x,\lambda^{k+1}) \\
         \le & \left\langle w^{k+1}, z^{k+1}-z \right\rangle \ = \left\langle z^{k}-z^{k+1}, z^{k+1}-z \right\rangle_{P} = \frac{1}{2}\|z^{k}-z\|_{P}^2-\frac{1}{2}\|z^{k+1}-z\|_{P}^2-\frac{1}{2}\|z^{k}-z^{k+1}\|_{P}^2 \\ 
         \ \leq & \frac{1}{2}\|z^{k}-z\|_{P}^2-\frac{1}{2}\|z^{k+1}-z\|_{P}^2 \ ,
    \end{split}
\end{align}
where the last inequality follows from the fact that $\|\cdot\|_{P}$ is a semi-norm. We finish the proof by noticing
\begin{align*}
    \begin{split}
        L(\bar x^k,\lambda)-L(x,\bar \lambda^k)\leq \frac 1k \sum_{i=0}^{k-1}L(x^{i+1},\lambda)-L(x,\lambda^{i+1}) \leq \frac{1}{2 k}\|z^{0}-z\|_{P}^2 \ ,
    \end{split}
\end{align*}
where the first inequality comes from convexity-concavity of $L(x,\lambda)$ and the telescoping using \eqref{eq:bound-gap}.
\end{proof}

Utilizing Theorem \ref{thm:main} and noticing that PPM (Algorithm \ref{alg:ppm}), PDHG (Algorithm \ref{alg:pdhg}) and ADMM (Algorithm \ref{alg:admm}) are special case of the update rule \eqref{eq:generic} with a different choice $P$, we directly obtain the following ergodic rate of the three algorithms.

\begin{cor}
    Consider PPM (Algorithm \ref{alg:ppm}) for solving a convex-concave primal-dual problem \eqref{eq:minimax-ppm}. Denote $\bar z^k=(\bar x^k,\bar \lambda^k)=\frac{1}{k}\sum_{i=1}^k z^i$ as the average iterate. Then it holds for any $\eta>0$, $k\ge 1$, and $z=(x,\lambda)$ that
    \begin{equation*}
        L(\bar x^k,\lambda)-L(x,\bar \lambda^k)\leq \frac{\|z-z^0\|^2_{2}}{2\eta k} \ .
    \end{equation*}
\end{cor}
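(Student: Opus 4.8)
The plan is to obtain this corollary as an immediate specialization of Theorem~\ref{thm:main} to the matrix $P$ identified for PPM in Lemma~\ref{lem:ppm}. First I would invoke Lemma~\ref{lem:ppm} to record that the PPM update rule (Algorithm~\ref{alg:ppm}) is an instance of the generic recursion~\eqref{eq:generic} with $P=\frac{1}{\eta}I$. Since $\eta>0$, this $P$ is strictly positive definite, hence in particular positive semi-definite, so the hypotheses of Theorem~\ref{thm:main} are met.

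Next I would apply Theorem~\ref{thm:main} directly. For this choice of $P$ it gives, for every $k\ge 1$ and every primal-dual pair $z=(x,\lambda)$,
\[
    L(\bar x^k,\lambda)-L(x,\bar\lambda^k)\le \frac{\|z-z^0\|_P^2}{2k} \ .
\]
The only remaining step is to rewrite the $P$-seminorm in Euclidean terms: by definition of the seminorm together with $P=\frac{1}{\eta}I$, we have $\|z-z^0\|_P^2=\langle z-z^0,\,P(z-z^0)\rangle=\frac{1}{\eta}\|z-z^0\|_2^2$. Substituting this identity into the bound yields precisely the claimed inequality $L(\bar x^k,\lambda)-L(x,\bar\lambda^k)\le \frac{\|z-z^0\|_2^2}{2\eta k}$.

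I do not anticipate any genuine obstacle, since the statement is a direct corollary: all the substantive work (the telescoping argument in~\eqref{eq:bound-gap} and the use of convexity-concavity of $L$) has already been carried out once and for all in the proof of Theorem~\ref{thm:main}, and the only ingredient specific to PPM is the explicit form of $P$ from Lemma~\ref{lem:ppm}. The one point worth verifying is the harmless scalar bookkeeping: that $P=\frac{1}{\eta}I$ satisfies the positive semi-definiteness requirement for all $\eta>0$ (immediate), and that the factor $\frac{1}{\eta}$ factors cleanly out of the $P$-seminorm so that it lands in the denominator of the final rate.
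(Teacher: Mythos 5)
Your proposal is correct and matches the paper's own argument exactly: the paper obtains this corollary by combining Lemma~\ref{lem:ppm} (PPM is the generic update~\eqref{eq:generic} with $P=\frac{1}{\eta}I$) with Theorem~\ref{thm:main}, and the identity $\|z-z^0\|_P^2=\frac{1}{\eta}\|z-z^0\|_2^2$ gives the stated rate. Nothing is missing.
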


\begin{cor}
    Consider PDHG (Algorithm \ref{alg:pdhg}) for solving a convex-concave primal-dual problem of the form \eqref{eq:minimax-pdhg}. Denote $\bar z^k=(\bar x^k,\bar \lambda^k)=\frac{1}{k}\sum_{i=1}^k z^i$ as the average iterate. Then it holds for any $0<\eta\le 1/\|A\|_2$, $k\ge 1$, and $z=(x,\lambda)$ that
    \begin{equation*}
        L(\bar x^k,\lambda)-L(x,\bar \lambda^k)\leq \frac{\|z-z^0\|^2_{P}}{2k} \ ,
    \end{equation*}
        where $P=\begin{pmatrix}
        \frac{1}{\eta}I & A^T \\ A & \frac{1}{\eta}I
    \end{pmatrix}$.
\end{cor}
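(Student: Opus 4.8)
The plan is to reduce the corollary to a direct invocation of Theorem \ref{thm:main}. By Lemma \ref{lem:pdhg}, the PDHG iterates obey the generic update rule \eqref{eq:generic} with $P=\begin{pmatrix} \frac{1}{\eta}I & A^T \\ A & \frac{1}{\eta}I \end{pmatrix}$, so the only hypothesis of Theorem \ref{thm:main} that remains to be verified is that this particular $P$ is positive semi-definite under the step-size restriction $0<\eta\le 1/\|A\|_2$. Once positive semi-definiteness is in hand, the claimed bound drops out verbatim from Theorem \ref{thm:main} applied with $z^0=(x^0,\lambda^0)$, with no further manipulation required.

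First I would write out the quadratic form attached to $P$. For an arbitrary $z=(x,\lambda)$ the off-diagonal blocks $A$ and $A^T$ produce the identity $x^TA^T\lambda=\lambda^TAx=\langle Ax,\lambda\rangle$, giving
\begin{equation*}
    \|z\|_P^2 = \frac{1}{\eta}\|x\|_2^2 + \frac{1}{\eta}\|\lambda\|_2^2 + 2\langle Ax,\lambda\rangle \ .
\end{equation*}
The task then becomes showing this is nonnegative for every $z$. The key step is to dominate the indefinite cross term by the diagonal. I would estimate it with Cauchy--Schwarz and the definition of the operator norm, $|\langle Ax,\lambda\rangle|\le \|Ax\|_2\|\lambda\|_2\le \|A\|_2\|x\|_2\|\lambda\|_2$, and then apply the arithmetic--geometric mean inequality $2\|x\|_2\|\lambda\|_2\le \|x\|_2^2+\|\lambda\|_2^2$. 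Chaining these yields $2\langle Ax,\lambda\rangle\ge -\|A\|_2(\|x\|_2^2+\|\lambda\|_2^2)$, so that $\|z\|_P^2\ge (\frac{1}{\eta}-\|A\|_2)(\|x\|_2^2+\|\lambda\|_2^2)$. The step-size assumption $\|A\|_2\le 1/\eta$ makes the prefactor nonnegative, and positive semi-definiteness follows.

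I do not expect a genuine obstacle here: the whole argument is a two-line estimate, and the only point demanding care is combining the two standard inequalities in the correct direction, so that the negative sign of the cross term is absorbed by the diagonal rather than reinforcing it. The threshold $\eta\le 1/\|A\|_2$ is precisely the diagonal-dominance condition $\frac{1}{\eta}\ge\|A\|_2$, so the step-size restriction stated in the corollary is exactly what the positive semi-definiteness verification consumes — no slack is wasted, and no stronger bound on $\eta$ is needed.
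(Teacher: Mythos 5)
Your proposal is correct and follows exactly the paper's route: the paper obtains this corollary by combining Lemma \ref{lem:pdhg} with Theorem \ref{thm:main}, leaving implicit the positive semi-definiteness check that the step-size restriction $0<\eta\le 1/\|A\|_2$ is there to guarantee. Your explicit Cauchy--Schwarz and AM--GM verification that $\|z\|_P^2\ge\left(\frac{1}{\eta}-\|A\|_2\right)\left(\|x\|_2^2+\|\lambda\|_2^2\right)\ge 0$ simply fills in that standard detail, so there is nothing to correct.
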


\begin{cor}\label{cor:admm}
    Consider ADMM (Algorithm \ref{alg:admm}) for solving a convex-concave primal-dual problem of the form \eqref{eq:minimax-admm}. Denote $\bar z^k=(\bar y^k,\bar x^k,\bar \lambda^k)=\frac{1}{k}\sum_{i=1}^k z^i$ as the average iterate. Then it holds for any $\eta>0$, $k\ge 1$, and $z=(y,x,\lambda)$ that
    \begin{equation*}
        L(\bar x^k,\bar y^k,\lambda)-L(x,y,\bar \lambda^k)\leq \frac{\|z-z^0\|^2_{P}}{2k} \ ,
    \end{equation*}
        where $P=\begin{pmatrix}
        0 & 0 & 0 \\ 0 & \eta A^TA & -A^T \\ 0 & -A & \frac{1}{\eta}I
    \end{pmatrix}$. By choosing $x=x^*$ and $y=y^*$ as the optimal solutions to the primal problem and noticing the optimal primal objective value $F^*=f(x^*)+g(y^*)= L(x^*, y^*, \bar \lambda^k)$, it holds for any $\lambda$ that 
    \begin{equation}\label{eq:primal-bound}
    f(\bar x^k)+g(\bar y^k)-\lambda^T(A\bar x^k+B\bar y^k-b) -F^*\leq \frac{\|(y^*,x^*, \lambda)-(y^0,x^0,\lambda^0)\|_{P}^2}{2k} \ .
\end{equation}
\end{cor}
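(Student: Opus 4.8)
The plan is to obtain the first displayed inequality as a direct instance of Theorem~\ref{thm:main}, and then to specialize the free variable $z=(y,x,\lambda)$ to read off the primal bound \eqref{eq:primal-bound}. The only prerequisite for invoking Theorem~\ref{thm:main} is that the ADMM matrix $P$ from Lemma~\ref{lem:admm} be positive semi-definite. I would verify this by completing the square on its nontrivial $2\times 2$ block: for any $(v_x,v_\lambda)$,
\[
\eta\|Av_x\|_2^2 - 2\langle Av_x,v_\lambda\rangle + \tfrac1\eta\|v_\lambda\|_2^2 = \left\|\sqrt{\eta}\,Av_x - \tfrac{1}{\sqrt{\eta}}v_\lambda\right\|_2^2 \ge 0,
\]
while the leading $y$-block is zero, so $P\succeq 0$ and $\|\cdot\|_P$ is a genuine semi-norm, exactly as the generic proof requires.

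With $P\succeq 0$ confirmed, Lemma~\ref{lem:admm} shows the ADMM updates are an instance of \eqref{eq:generic} for the saddle function $L(x,y,\lambda)$ of \eqref{eq:minimax-admm}, which is jointly convex in the combined primal variable $(y,x)$ and linear---hence concave---in $\lambda$. Theorem~\ref{thm:main} then applies, treating $(y,x)$ as the primal block and $\lambda$ as the dual block, and yields the first claimed inequality $L(\bar x^k,\bar y^k,\lambda)-L(x,y,\bar\lambda^k)\le \|z-z^0\|_P^2/(2k)$ for every $z=(y,x,\lambda)$.

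To derive \eqref{eq:primal-bound} I would set $x=x^*$ and $y=y^*$, a primal optimal and therefore feasible pair, so that $Ax^*+By^*-b=0$ and consequently $L(x^*,y^*,\bar\lambda^k)=f(x^*)+g(y^*)=F^*$ independent of $\bar\lambda^k$. Expanding the remaining term as $L(\bar x^k,\bar y^k,\lambda)=f(\bar x^k)+g(\bar y^k)-\lambda^T(A\bar x^k+B\bar y^k-b)$ and substituting both into the inequality of the previous paragraph turns its left-hand side into precisely $f(\bar x^k)+g(\bar y^k)-\lambda^T(A\bar x^k+B\bar y^k-b)-F^*$, giving \eqref{eq:primal-bound}.

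The step I expect to require the most care is not any single computation but the justification that Theorem~\ref{thm:main} transfers unchanged to the three-block ordering $(y,x,\lambda)$: one must check that the convexity--concavity splitting in \eqref{eq:bound-gap} still bounds $L(\bar x^k,\bar y^k,\lambda)-L(x,y,\bar\lambda^k)$ by $\langle w^{k+1}, z^{k+1}-z\rangle$ when the subgradient $w^{k+1}\in\mathcal F(z^{k+1})$ is the three-component object of Lemma~\ref{lem:admm}. Because $L$ is jointly convex in $(y,x)$, the two primal components collapse into a single convexity inequality and the dual component into the usual concavity inequality, so the argument goes through verbatim, and the ensuing telescoping and semi-norm bound are identical to the generic proof.
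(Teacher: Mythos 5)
Your proposal is correct and follows exactly the paper's route: the bound is obtained by applying Theorem~\ref{thm:main} to the ADMM instance of \eqref{eq:generic} identified in Lemma~\ref{lem:admm}, and \eqref{eq:primal-bound} follows by substituting the feasible optimal pair $(x^*,y^*)$ so that the constraint term in $L(x^*,y^*,\bar\lambda^k)$ vanishes. Your explicit completion-of-the-square verification that $P\succeq 0$ and your check that the three-block variable $(y,x,\lambda)$ fits the convex--concave framework of Theorem~\ref{thm:main} are details the paper leaves implicit, but they do not change the argument.
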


\begin{rem}
    The left-hand-side of \eqref{eq:primal-bound} measures the optimality and feasibility of the solution $(\bar x^k, \bar y^k)$ to the primal problem~\eqref{eq:problem-admm}, see \cite[Theorem 3.60]{beck2017first}.
\end{rem}

\section{Extensions}
The update rules of PPM, PDHG and ADMM require solving a proximal (implicit) oracle, because the right-hand-side of \eqref{eq:generic} depends on $z^{k+1}$. This step in general can be computationally expensive. In this section, we discuss two types of extensions to avoid this issue: linearized methods and inexact methods. 


\subsection{Linearized methods}
Instead of using $F(z^{k+1})$ in ~\eqref{eq:generic}, we consider a generic update rule:
\begin{equation}\label{eq:update-linear}
    P(z^k-z^{k+1})\in F^{k+1} \ ,
\end{equation}
where $P\in \mathbb R^{(m+n)\times (m+n)}$ is a positive semi-definite matrix and $F^{k+1}$ is an approximation to the sub-differential $\mathcal F(z^{k+1})$ so that \eqref{eq:update-linear} is inexpensive to solve.

Two economic examples of the linearized methods are gradient descent for minimization problems and linearized PDHG for primal-dual problems.

\begin{itemize}
    \item \textbf{Gradient descent~\cite{nesterov2003introductory}.} Consider a minimization problem $\min_x f(x)$ where $f(x)$ is a convex and $L$-smooth function (i.e., $\nabla f(x)$ is $L$-Lipschitz continuous). Gradient descent has update rule $x^{k+1}=x^k-\eta \nabla f(x^k)$, where $\eta$ is the step-size of the algorithm. It is easy to see that gradient descent is an instance of \eqref{eq:update-linear} with $z^k=x^k$ (one can view that the Lagrangian function $L(x,y)$ is independent of $y$), $P=\frac{1}{\eta}I$ and $F^{k+1}=\nabla f(x^k)$.
    \item \textbf{Linearized PDHG~\cite{chambolle2016ergodic}.} Linearized PDHG, which is formally stated in Algorithm~\ref{alg:l-pdhg}, solves \eqref{eq:minimax-pdhg} without the need of solving proximal oracles. Similar to the proof of Lemma \ref{lem:pdhg}, we can rewrite the update rule of linearized PDHG as an instance of \eqref{eq:update-linear} with $P=\frac{1}{\eta}I$ and $F^{k+1}=\begin{pmatrix}
        \nabla f(x^k)-A^T\lambda^{k+1} \\ \nabla g^*(\lambda^k)+Ax^{k+1}
    \end{pmatrix}$.
\end{itemize}




\begin{algorithm}\renewcommand{\algorithmicrequire}{\textbf{Input:}}
    \caption{Linearized PDHG for \eqref{eq:minimax-pdhg}}
    \label{alg:l-pdhg}
    \begin{algorithmic}[1]
        \REQUIRE initial point $(x^0,\lambda^0)$, {step-size} $\eta >0$.
        \FOR{$k=0,1,...$}
        \STATE $x^{k+1}:= x^k-\eta(\nabla f(x^k)-A^T\lambda^k) $
        \STATE $\lambda^{k+1}:=\lambda^k - \eta (\nabla g^*(\lambda^k)+A(2x^{k+1}-x^k))$
        \ENDFOR
    \end{algorithmic}
\end{algorithm}


To guarantee the convergence property of the algorithms, $F^{k+1}$ should well approximate $\mathcal F(z^{k+1})$, which is formalized in the following assumptions:
\begin{ass}\label{ass:gap}
    Consider iterates $\{z^k=(x^k,\lambda^k)\}$ from algorithm with update rule \eqref{eq:update-linear}. 
    \begin{enumerate}[label=(\roman*)]
        \item There exists a matrix $E\succeq 0$ such that for any $z=(x,y)$,
    \begin{equation}\label{eq:approximation}
        L(x^{k+1},\lambda)-L(x,\lambda^{k+1}) \leq \langle F^{k+1}, z^{k+1}-z\rangle + \frac{1}{2}\|z^k-z^{k+1}\|_E^2 \ .
    \end{equation}
        \item It holds that $P\succeq E$.
    \end{enumerate}
\end{ass}
    When $F^{k+1}=F(z^{k+1})$, Assumption \ref{ass:gap} \textit{(i)} holds with $E=0$. The matrix $E$, in principle, measures the difference between $F^{k+1}$ and $F(z^{k+1})$. Assumption \ref{ass:gap} \textit{(ii)} indicates that such difference (i.e. $E$) is not too large compared to the matrix $P$.

The next theorem shows that an algorithm with update rule \eqref{eq:update-linear} has $\mathcal O(1/k)$ sublinear rate under Assumption \ref{ass:gap}:
\begin{thm}\label{thm:linear}
    Consider an iterate update rule \eqref{eq:update-linear} with a positive semi-definite matrix $P$ for solving a convex-concave primal-dual problem \eqref{eq:minimax-pdhg}. The iterates $\{z^k=(x^k,\lambda^k)\}_{k=1,...,\infty}$ are obtained from this iterate update rule~\eqref{eq:update-linear}, and the initial solution $z^0=(x^0,\lambda^0)$. Denote $\bar z^k=(\bar x^k,\bar \lambda^k)=\frac{1}{k}\sum_{i=1}^k z^i$ as the average iterate. Suppose Assumption \ref{ass:gap} holds. Then it holds for any $k\ge 1$ and a primal-dual solution $z=(x,\lambda)$ that
    \begin{equation*}
        L(\bar x^k,\lambda)- L(x,\bar \lambda^k)\leq \frac{\|z-z^0\|^2_{P}}{2k} \ .
    \end{equation*}
\end{thm}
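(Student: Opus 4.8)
The plan is to mimic the four-line proof of Theorem \ref{thm:main} almost verbatim, with Assumption \ref{ass:gap} supplying exactly the two places where the extra approximation term must be absorbed. The engine of the original proof is the chain \eqref{eq:bound-gap}: convexity-concavity bounds the primal-dual gap by $\langle w^{k+1}, z^{k+1}-z\rangle$, the update rule rewrites $w^{k+1}=P(z^k-z^{k+1})$ so this inner product becomes $\langle z^k-z^{k+1}, z^{k+1}-z\rangle_P$, and the three-point (polarization) identity turns that into $\tfrac12\|z^k-z\|_P^2 - \tfrac12\|z^{k+1}-z\|_P^2 - \tfrac12\|z^k-z^{k+1}\|_P^2$. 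In the linearized setting I would replace the first step: instead of using $w^{k+1}\in\mathcal F(z^{k+1})$ and pure convexity, I would invoke Assumption \ref{ass:gap}(i) directly, which already packages the convexity-concavity estimate for the approximate operator.

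Concretely, first I would write $L(x^{k+1},\lambda)-L(x,\lambda^{k+1}) \le \langle F^{k+1}, z^{k+1}-z\rangle + \tfrac12\|z^k-z^{k+1}\|_E^2$ from \eqref{eq:approximation}. Next I would substitute the update rule \eqref{eq:update-linear}, namely $F^{k+1}\ni P(z^k-z^{k+1})$ (taking the particular element realizing the inclusion), so that $\langle F^{k+1}, z^{k+1}-z\rangle = \langle z^k-z^{k+1}, z^{k+1}-z\rangle_P$, and apply the same polarization identity to get $\tfrac12\|z^k-z\|_P^2 - \tfrac12\|z^{k+1}-z\|_P^2 - \tfrac12\|z^k-z^{k+1}\|_P^2$. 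Combining with the $E$-term yields
\begin{equation*}
    L(x^{k+1},\lambda)-L(x,\lambda^{k+1}) \le \tfrac12\|z^k-z\|_P^2 - \tfrac12\|z^{k+1}-z\|_P^2 - \tfrac12\|z^k-z^{k+1}\|_P^2 + \tfrac12\|z^k-z^{k+1}\|_E^2 \ .
\end{equation*}
Here is where Assumption \ref{ass:gap}(ii) enters: since $P\succeq E$, we have $\|z^k-z^{k+1}\|_E^2 \le \|z^k-z^{k+1}\|_P^2$, so the last two terms combine to something nonpositive and can be dropped, recovering exactly the telescoping inequality $L(x^{k+1},\lambda)-L(x,\lambda^{k+1}) \le \tfrac12\|z^k-z\|_P^2 - \tfrac12\|z^{k+1}-z\|_P^2$.

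From that point the argument is identical to Theorem \ref{thm:main}: summing over $i=0,\dots,k-1$ telescopes the right-hand side to $\tfrac12\|z^0-z\|_P^2$, and Jensen's inequality (convexity-concavity of $L$ in the averaged arguments) gives $L(\bar x^k,\lambda)-L(x,\bar\lambda^k) \le \tfrac1k\sum_{i=0}^{k-1}\big(L(x^{i+1},\lambda)-L(x,\lambda^{i+1})\big)$, producing the claimed bound. I do not anticipate a genuine obstacle, since Assumption \ref{ass:gap} was evidently designed to make this substitution work; the only point requiring care is the interplay of the two quadratic residual terms, where one must verify that the single matrix inequality $P\succeq E$ is exactly strong enough to kill both the $-\tfrac12\|z^k-z^{k+1}\|_P^2$ and $+\tfrac12\|z^k-z^{k+1}\|_E^2$ contributions simultaneously. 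A minor subtlety worth flagging is that $\|\cdot\|_P$ is only a semi-norm when $P$ is merely positive semidefinite, but the proof only ever uses nonnegativity of $\|z^k-z^{k+1}\|_P^2 - \|z^k-z^{k+1}\|_E^2$, which follows from $P\succeq E\succeq 0$, so degeneracy of $P$ causes no trouble.
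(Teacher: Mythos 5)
Your proposal is correct and follows the paper's own proof essentially step for step: invoke Assumption \ref{ass:gap}(i) to bound the gap, substitute $F^{k+1} \ni P(z^k - z^{k+1})$, apply the polarization identity, use $P \succeq E$ to drop the two residual quadratic terms, then telescope and average. The paper's proof is exactly this argument, so there is nothing to add.
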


\begin{proof}
It holds by Assumption \ref{ass:gap} that
    \begin{align*}
    \begin{split}
        L(x^{k+1},\lambda)- L(x,\lambda^{k+1})& \ \leq \langle F^{k+1},z^{k+1}-z\rangle +\frac{1}{2}\|z^k-z^{k+1}\|_E^2 = \langle z^k-z^{k+1},z^{k+1}-z\rangle_P +\frac{1}{2}\|z^k-z^{k+1}\|_E^2\\
        & \ = \frac{1}{2}\|z^k-z\|_P^2-\frac{1}{2}\|z^{k+1}-z\|_P^2-\frac{1}{2}\|z^k-z^{k+1}\|_P^2+\frac{1}{2}\|z^k-z^{k+1}\|_E^2\\
        & \ \leq \frac{1}{2}\|z^k-z\|_P^2-\frac{1}{2}\|z^{k+1}-z\|_P^2 \ ,
    \end{split}
    \end{align*}
    where the last inequality follows from $P\succeq E$. We finish the proof by noticing 
    \begin{align*}
    \begin{split}
        L(\bar x^k,\lambda)-L(x,\bar \lambda^k)\leq \frac 1k \sum_{i=0}^{k-1}L(x^{i+1},\lambda)-L(x,\lambda^{i+1}) \leq \frac{1}{2 k}\|z^{0}-z\|_{P}^2 \ .
    \end{split}
\end{align*}
\end{proof}

As a direct consequence of Theorem \ref{thm:linear}, we can directly obtain the $\mathcal O(1/k)$ sublinear convergence rate of gradient descent and linearized PDHG by checking Assumption \ref{ass:gap} holds:

\begin{cor}
    Consider gradient descent for solving a convex $L$-smooth minimization problem $\min_x f(x)$. Denote $\bar x^k=\frac{1}{k}\sum_{i=1}^k x^i$ as the average iterate. Then it holds for any $0<\eta\le 1/L$, $k\ge 1$, and $x\in \mathrm{dom}(f)$ that
    \begin{equation*}
        f(\bar x^k)-f(x)\leq \frac{\|x-x^0\|^2}{2\eta  k} \ .
    \end{equation*}
\end{cor}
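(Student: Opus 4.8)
The plan is to recognize that gradient descent is exactly the instance of the linearized update rule~\eqref{eq:update-linear} identified in the bullet list preceding Algorithm~\ref{alg:l-pdhg}: we take $z^k=x^k$ (the Lagrangian $L(x,\lambda)=f(x)$ is independent of the dual variable, so the gap $L(\bar x^k,\lambda)-L(x,\bar\lambda^k)$ collapses to $f(\bar x^k)-f(x)$), $P=\frac{1}{\eta}I$, and $F^{k+1}=\nabla f(x^k)$. The corollary then follows immediately from Theorem~\ref{thm:linear}, so the only real work is to check that Assumption~\ref{ass:gap} holds for this choice; everything else is a direct substitution into the conclusion of that theorem.

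For Assumption~\ref{ass:gap}\textit{(i)}, I would exhibit a matrix $E\succeq 0$ satisfying
\begin{equation*}
    f(x^{k+1})-f(x)\leq \langle \nabla f(x^k),\, x^{k+1}-x\rangle + \tfrac{1}{2}\|x^k-x^{k+1}\|_E^2 \ .
\end{equation*}
First I would invoke convexity of $f$ to get $f(x^k)-f(x)\leq \langle \nabla f(x^k),\, x^k-x\rangle$, and then the descent lemma (the standard consequence of $L$-smoothness) to get $f(x^{k+1})\leq f(x^k)+\langle \nabla f(x^k),\, x^{k+1}-x^k\rangle+\frac{L}{2}\|x^{k+1}-x^k\|^2$. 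Adding these two inequalities and merging the inner products into $\langle \nabla f(x^k),\, x^{k+1}-x\rangle$ produces exactly the required bound with $E=LI$.

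The remaining condition, Assumption~\ref{ass:gap}\textit{(ii)}, requires $P\succeq E$, i.e. $\frac{1}{\eta}I\succeq LI$, which is equivalent to $\eta\leq 1/L$ — precisely the stepsize restriction stated in the corollary. With both parts verified, Theorem~\ref{thm:linear} yields $f(\bar x^k)-f(x)\leq \frac{\|x-x^0\|_P^2}{2k}=\frac{\|x-x^0\|^2}{2\eta k}$, as claimed. I expect the only nontrivial step to be the combination of convexity with the descent lemma in part \textit{(i)}: the choice $E=LI$ is what lets the smoothness slack be absorbed into the $P$-seminorm, and the condition $\eta\leq 1/L$ is not an extra hypothesis but exactly the threshold at which $P-E$ remains positive semidefinite.
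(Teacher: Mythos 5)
Your proposal is correct and follows essentially the same route as the paper: the paper also verifies Assumption~\ref{ass:gap}\textit{(i)} by splitting $f(x^{k+1})-f(x)$ into a smoothness term and a convexity term, merging the inner products to obtain $E=LI$, and then invoking Theorem~\ref{thm:linear}. If anything, you are slightly more explicit than the paper in spelling out that Assumption~\ref{ass:gap}\textit{(ii)}, namely $P=\frac{1}{\eta}I\succeq LI$, is exactly the stepsize condition $\eta\le 1/L$, which the paper leaves implicit.
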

\begin{proof}
We just need to verify that Assumption \ref{ass:gap} holds. From the $L$-smoothness and convexity of the function $f$, we have
    \begin{align}\label{eq:smooth}
        \begin{split}
            &f(x^{k+1})-f(x)  \ = f(x^{k+1})-f(x^k)+f(x^k)-f(x) \\
             \ \leq &\langle \nabla f(x^k),x^{k+1}-x^k \rangle +\frac{L}{2}\|x^k-x^{k+1}\|_2^2+\langle \nabla f(x^k),x^{k}-x \rangle  =\langle \nabla f(x^k),x^{k+1}-x\rangle +\frac{L}{2}\|x^k-x^{k+1}\|_2^2 \ ,
        \end{split}
    \end{align}
    which finishes the proof by utilizing Theorem \ref{thm:linear}.
\end{proof}

\begin{cor}
    Consider Linearized PDHG (Algorithm \ref{alg:l-pdhg}) for solving a convex-concave primal-dual problem of the form \eqref{eq:minimax-pdhg}. Denote $\bar z^k=(\bar x^k,\bar \lambda^k)=\frac{1}{k}\sum_{i=1}^k z^i$ as the average iterate. Suppose both $f$ and $g^*$ are $L$-smooth convex functions. Then it holds for any $0<\eta\le 1/(L+\|A\|_2)$, $k\ge 1$, and $z=(x,\lambda)$ that
    \begin{equation*}
        L(\bar x^k,\lambda)-L(x,\bar \lambda^k)\leq \frac{\|z-z^0\|^2_{P}}{2k} \ ,
    \end{equation*}
        where $P=\begin{pmatrix}
        \frac{1}{\eta}I & A^T \\ A & \frac{1}{\eta}I
    \end{pmatrix}$.
\end{cor}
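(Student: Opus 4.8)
The plan is to invoke Theorem \ref{thm:linear}, so the whole task reduces to identifying the pair $(P,F^{k+1})$ for Algorithm \ref{alg:l-pdhg} and then checking that Assumption \ref{ass:gap} holds. First I would put the two update lines into the residual form of \eqref{eq:update-linear}. Reading them off as $\frac{1}{\eta}(x^k-x^{k+1})=\nabla f(x^k)-A^T\lambda^k$ and $\frac{1}{\eta}(\lambda^k-\lambda^{k+1})=\nabla g^*(\lambda^k)+A(2x^{k+1}-x^k)$, I would move an $A^T(\lambda^k-\lambda^{k+1})$ term across the first line and an $A(x^k-x^{k+1})$ term across the second (exactly the manipulation in the proof of Lemma \ref{lem:pdhg}), which replaces $\lambda^k$ by $\lambda^{k+1}$ in the first component. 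This produces $P(z^k-z^{k+1})=F^{k+1}$ with $P=\begin{pmatrix}\frac{1}{\eta}I & A^T \\ A & \frac{1}{\eta}I\end{pmatrix}$ and $F^{k+1}=\begin{pmatrix}\nabla f(x^k)-A^T\lambda^{k+1} \\ \nabla g^*(\lambda^k)+Ax^{k+1}\end{pmatrix}$, matching the $P$ in the statement.

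Next I would verify Assumption \ref{ass:gap}\textit{(i)} with $E=LI$. Splitting the gap as $L(x^{k+1},\lambda)-L(x,\lambda^{k+1})=[L(x^{k+1},\lambda)-L(x^{k+1},\lambda^{k+1})]+[L(x^{k+1},\lambda^{k+1})-L(x,\lambda^{k+1})]$, the second bracket equals $f(x^{k+1})-f(x)-\langle A^T\lambda^{k+1},x^{k+1}-x\rangle$; bounding $f(x^{k+1})-f(x)$ by one step of $L$-smoothness at $x^k$ plus convexity (the same two-term estimate as in \eqref{eq:smooth}) yields $\langle \nabla f(x^k)-A^T\lambda^{k+1},\,x^{k+1}-x\rangle+\frac{L}{2}\|x^{k+1}-x^k\|_2^2$. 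Symmetrically, the first bracket equals $\langle Ax^{k+1},\lambda^{k+1}-\lambda\rangle+g^*(\lambda^{k+1})-g^*(\lambda)$, and the same smoothness-plus-convexity estimate for $g^*$ at $\lambda^k$ gives $\langle \nabla g^*(\lambda^k)+Ax^{k+1},\,\lambda^{k+1}-\lambda\rangle+\frac{L}{2}\|\lambda^{k+1}-\lambda^k\|_2^2$. Adding the two bounds reproduces $\langle F^{k+1},z^{k+1}-z\rangle+\frac{L}{2}\|z^{k+1}-z^k\|_2^2$, so \eqref{eq:approximation} holds with $E=LI$.

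Finally I would check Assumption \ref{ass:gap}\textit{(ii)}, namely $P\succeq LI$. Subtracting, this is $\begin{pmatrix}(\frac{1}{\eta}-L)I & A^T \\ A & (\frac{1}{\eta}-L)I\end{pmatrix}\succeq 0$. A block matrix $\begin{pmatrix}cI & A^T \\ A & cI\end{pmatrix}$ has eigenvalues $c\pm\sigma_i(A)$, so it is positive semi-definite exactly when $c\ge\|A\|_2$; here $c=\frac{1}{\eta}-L$, and $\frac{1}{\eta}-L\ge\|A\|_2$ is equivalent to the hypothesis $\eta\le 1/(L+\|A\|_2)$. With both parts of Assumption \ref{ass:gap} verified, Theorem \ref{thm:linear} delivers the claimed bound verbatim.

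I expect the step demanding the most care to be the spectral characterization $P\succeq LI$: pinning down the exact step-size threshold hinges on the eigenvalue formula $c\pm\sigma_i(A)$ for the block matrix, and it is this computation—rather than the convexity/smoothness bookkeeping, which merely reuses the gradient-descent estimate twice—that ties the admissible range of $\eta$ to $\|A\|_2$.
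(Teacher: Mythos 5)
Your proposal is correct and follows essentially the same route as the paper's proof: verify Assumption \ref{ass:gap}\textit{(i)} with $E=LI$ via the identical bracket decomposition and smoothness-plus-convexity estimates for $f$ and $g^*$, check $P\succeq LI$ under the step-size condition $\eta\le 1/(L+\|A\|_2)$, and invoke Theorem \ref{thm:linear}. The only differences are in the level of detail: you spell out the identification of $(P,F^{k+1})$ and the spectral argument for $\begin{pmatrix}(\frac{1}{\eta}-L)I & A^T \\ A & (\frac{1}{\eta}-L)I\end{pmatrix}\succeq 0$, both of which the paper asserts without elaboration, and your use of $\nabla g^*(\lambda^k)$ as the linearization point is the correct one needed to match $F^{k+1}$ (the paper's corresponding display writes $\nabla g^*(\lambda^{k+1})$, an apparent typo).
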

\begin{proof}
We just need to verify that Assumption \ref{ass:gap} holds. It holds from the smoothness and convexity of functions $f$ and $g^*$ that
\begin{align}\label{eq:eq-d}
\begin{split}
    L(x^{k+1},\lambda)- L(x^{k+1},\lambda^{k+1}) & =g^*(\lambda^{k+1})-g^*(\lambda)-(x^{k+1})^TA^T(\lambda-\lambda^{k+1}) \\ 
    & \leq (\nabla g^*(\lambda^{k+1})+Ax^{k+1})^T(\lambda^{k+1}-\lambda)+\frac{L}{2}\|\lambda^k-\lambda^{k+1}\|_2^2 \ ,
\end{split}
\end{align}
\begin{align}\label{eq:eq-p}
    \begin{split}
         L(x^{k+1},\lambda^{k+1})- L(x,\lambda^{k+1}) & = f(x^{k+1})-f(x) -(\lambda^{k+1})^TA(x^{k+1}-x) \\ 
        & \leq (\nabla f(x^k)-A^T\lambda^{k+1})^T(x^{k+1}-x)+\frac{L}{2}\|x^{k}-x^{k+1}\|_2^2\ .
    \end{split}
\end{align}
Combine \eqref{eq:eq-d} and \eqref{eq:eq-p} and we have
\begin{align*}
\begin{split}
    L(x^{k+1},\lambda)-L(x,\lambda^{k+1}) & = L(x^{k+1},\lambda)-L(x^{k+1},\lambda^{k+1})+ L(x^{k+1},\lambda^{k+1})-L(x,\lambda^{k+1}) \\
    & \leq \langle F^{k+1}, z^{k+1}-z\rangle+\frac{L}{2}\|z^{k}-z^{k+1}\|_2^2 \ ,
\end{split}
\end{align*}
which implies Assumption \ref{ass:gap} \textit{(i)} holds with $E=LI$. Furthermore, with the choice of stepsize $\eta \leq \frac{1}{L+\|A\|_2}$, it holds that $P-E=\begin{pmatrix}
        \pran{\frac{1}{\eta}-L}I & A^T \\ A & \pran{\frac{1}{\eta}-L}I
    \end{pmatrix}\succeq 0$. The result follows from application of Theorem \ref{thm:linear}.
\end{proof}

\subsection{Inexact methods}
Another typical way to avoid the potentially-expensive proximal steps is to update the iterates by inexactly solving~\eqref{eq:generic}. In this section, we consider inexact methods with updates,
\begin{equation}\label{eq:inexact}
    \mathcal P(z^k-z^{k+1})+\epsilon^{k+1}\in\mathcal F(z^{k+1}) \ ,
\end{equation}
where $\epsilon^{k+1}$ is the error for iteration $k+1$. The next theorem shows that as long as the total error, $\sum_{i=1}^k \|\epsilon^i\|_2$ is controllable, then the inexact algorithm obtains a reasonable convergence guarantee when the constraint set of the problem is bounded.
\begin{thm}\label{thm:inexact}
    Consider an iterate update rule \eqref{eq:inexact} with a positive semi-definite matrix $P$ for solving a convex-concave primal-dual problem with bounded region $Z$. Denote $D$ the diameter of $Z$. The iterates $\{z^k=(x^k,\lambda^k)\}_{k=1,...,\infty}$ are obtained form the iterate update rule~\eqref{eq:inexact} and the initial solution $z^0=(x^0,\lambda^0)$. Denote $\bar z^k=(\bar x^k,\bar \lambda^k)=\frac{1}{k}\sum_{i=1}^k z^i$ as the average iterate. Then it holds for any $k\ge 1$ and a primal-dual solution $z=(x,\lambda) \in Z$ that
    \begin{equation*}
        L(\bar x^k,\lambda)-L(x,\bar \lambda^k)\leq \frac{\|z-z^0\|^2_{\mathcal P}}{2k}+\frac{D\sum_{i=1}^k\|\epsilon^{i}\|_2}{k} \ .
    \end{equation*}
\end{thm}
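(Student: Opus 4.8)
The plan is to mirror the four-line argument of Theorem \ref{thm:main}, carrying along the extra term that the inexactness introduces. First I would set $w^{k+1}=\mathcal P(z^k-z^{k+1})+\epsilon^{k+1}\in\mathcal F(z^{k+1})$ as prescribed by \eqref{eq:inexact}, so that the convexity-concavity bound of \eqref{eq:bound-gap} still gives
\begin{equation*}
    L(x^{k+1},\lambda)-L(x,\lambda^{k+1}) \le \langle w^{k+1}, z^{k+1}-z\rangle = \langle z^k-z^{k+1}, z^{k+1}-z\rangle_{\mathcal P} + \langle \epsilon^{k+1}, z^{k+1}-z\rangle \ .
\end{equation*}
The $\mathcal P$-inner product is then expanded by the same three-point (polarization) identity used in Theorem \ref{thm:main}, namely $\langle z^k-z^{k+1}, z^{k+1}-z\rangle_{\mathcal P} = \frac12\|z^k-z\|_{\mathcal P}^2 - \frac12\|z^{k+1}-z\|_{\mathcal P}^2 - \frac12\|z^k-z^{k+1}\|_{\mathcal P}^2$, and the last (nonpositive) term is discarded.

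The only genuinely new step is controlling the error contribution $\langle \epsilon^{k+1}, z^{k+1}-z\rangle$. Here I would invoke Cauchy–Schwarz together with the hypothesis that both $z^{k+1}$ and $z$ lie in the bounded region $Z$ of diameter $D$, giving $\langle \epsilon^{k+1}, z^{k+1}-z\rangle \le \|\epsilon^{k+1}\|_2\,\|z^{k+1}-z\|_2 \le D\|\epsilon^{k+1}\|_2$. This is exactly the point where the boundedness assumption is indispensable: in the exact case the cross term vanishes, whereas here a uniform bound on $\|z^{k+1}-z\|_2$ is the mechanism that keeps the accumulated error from being uncontrollable. Combining these pieces yields the per-iteration estimate $L(x^{i+1},\lambda)-L(x,\lambda^{i+1}) \le \frac12\|z^i-z\|_{\mathcal P}^2 - \frac12\|z^{i+1}-z\|_{\mathcal P}^2 + D\|\epsilon^{i+1}\|_2$.

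Finally, I would telescope this inequality over $i=0,\dots,k-1$, dropping the terminal nonnegative $\mathcal P$-norm term, and divide by $k$; applying convexity-concavity of $L$ (Jensen) on the left to pass from the per-iterate gaps to the averaged gap $L(\bar x^k,\lambda)-L(x,\bar\lambda^k)$ produces the stated bound $\frac{\|z-z^0\|_{\mathcal P}^2}{2k} + \frac{D\sum_{i=1}^k\|\epsilon^i\|_2}{k}$. I expect essentially no obstacle beyond the bookkeeping of the error term: the argument is a verbatim repetition of the exact proof with the single addition of the Cauchy–Schwarz/diameter estimate, so the conceptual content reduces to recognizing that boundedness of $Z$ is precisely what the inexact perturbation requires.
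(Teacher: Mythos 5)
Your proposal is correct and follows essentially the same route as the paper's own proof: the same convexity-concavity decomposition and three-point identity from Theorem \ref{thm:main}, the same Cauchy--Schwarz plus diameter bound $\langle \epsilon^{k+1}, z^{k+1}-z\rangle \le D\|\epsilon^{k+1}\|_2$ to control the error term, and the same telescoping/Jensen finish. No meaningful differences to report.
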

\begin{rem}
    Utilizing Theorem \ref{thm:inexact}, we can obtain the convergence guarantees for inexact PDHG and inexact ADMM. Notice that $\epsilon^i$ is essentially the error in (sub-)gradient of the minimization sub-problem. One can utilize standard first-order methods (such as gradient descent and accelerated gradient descent) for the sub-problem with carefully chosen tolerance $\epsilon^i$ so that $\sum_{i=1}^k \|\epsilon^i\|_2$ is bounded.
\end{rem}
\begin{proof}
Denote $w^{k+1}=(w_x^{k+1},w_{\lambda}^{k+1})\in \mathcal F(z^{k+1})$. It then holds by convexity-concavity of $L(x,\lambda)$ that
\begin{align}
    \begin{split}
        & L(x^{k+1},\lambda)- L(x,\lambda^{k+1}) \ = L(x^{k+1},\lambda)-L(x^{k+1},\lambda^{k+1})+L(x^{k+1},\lambda^{k+1})-L(x,\lambda^{k+1}) \\
         \le & \left\langle w^{k+1}, z^{k+1}-z \right\rangle \ = \left\langle z^{k}-z^{k+1}, z^{k+1}-z \right\rangle_{P}+\langle \epsilon^{k+1},z^{k+1}-z\rangle \\
         = & \frac{1}{2}\|z^{k}-z\|_{P}^2-\frac{1}{2}\|z^{k+1}-z\|_{P}^2-\frac{1}{2}\|z^{k}-z^{k+1}\|_{P}^2 +\langle \epsilon^{k+1},z^{k+1}-z\rangle \\ 
         \ \leq & \frac{1}{2}\|z^{k}-z\|_{P}^2-\frac{1}{2}\|z^{k+1}-z\|_{P}^2+\langle \epsilon^{k+1},z^{k+1}-z\rangle \\
         \ \leq & \frac{1}{2}\|z^{k}-z\|_{P}^2-\frac{1}{2}\|z^{k+1}-z\|_{P}^2+D\|\epsilon^{k+1}\|_2\ ,
    \end{split}
\end{align}
where the last inequality follows from Cauchy-Schwarz inequlity that $\langle \epsilon^{k+1},z^{k+1}-z\rangle \leq \|\epsilon^{k+1}\|_2\|z^{k+1}-z\|_2$ and $\|z^k-z\|_2\leq D$ by the definition of $D$ as diameter of $Z$. We finish the proof by noticing
\begin{align*}
    \begin{split}
        L(\bar x^k,\lambda)-L(x,\bar \lambda^k)\leq \frac 1k \sum_{i=0}^{k-1}L(x^{i+1},\lambda)-L(x,\lambda^{i+1}) \leq \frac{1}{2 k}\|z^{0}-z\|_{P}^2 +\frac{D\sum_{i=1}^k\|\epsilon^{i}\|_2}{k} \ ,
    \end{split}
\end{align*}
where the first inequality comes from convexity-concavity of $L(x,\lambda)$ and the second is from the telescoping.
\end{proof}



\section*{Acknowledgement}
This work was motivated by obtaining a ``teachable'' proof on the ergodic rates of PDHG and ADMM when the first author teaches UChicago BUSN 36919 in 2023. The authors would like to thank the students of BUSN 36919 for their helpful discussions during the classes.


\bibliographystyle{amsplain}
\bibliography{ref-papers}

\end{document}